\title{Benoist--Hulin groups}
\author{Hideki Miyachi and Yannian Zhao}
\date{\today}
\address{Hideki Miyachi and Yannian Zhao,
School of Mathematics and Physics,
College of Science and Engineering,
Kanazawa University,
Kakuma-machi, Kanazawa,
Ishikawa, 920-1192, Japan
}
\email{miyachi@se.kanazawa-u.ac.jp}
\email{yannianz@stu.kanazawa-u.ac.jp}
\subjclass[2020]{Primary 30C62; Secondary 57M60.}
\keywords{Quasidisks, quasiconformal mappings, Jordan curves.}
\thanks{The first author is supported by JSPS KAKENHI Grant Numbers 25K00909, 23K22396, 70183225.}
\newtheorem{theorem}{Theorem}[section]
\newtheorem{corollary}{Corollary}
\newtheorem{lemma}{Lemma}[section]
\begin{document}

\begin{abstract}
A Benoist--Hulin group is, by definition, a subgroup $\Gamma$ of ${\rm PSL}_2(\mathbb{C})$ such that any $\Gamma$-invariant closed set consisting of Jordan curves in the space of closed subsets of the Riemann sphere that are not singletons is composed of $K$-quasicircles for some $K \ge 1$. Y.~Benoist and D.~Hulin showed that the full group ${\rm PSL}_2(\mathbb{C})$ is a Benoist--Hulin group. In this paper, we develop the theory of Benoist--Hulin groups and show that both uniform lattices and parabolic subgroups are Benoist--Hulin groups.
\end{abstract}

\maketitle

\section{Introduction}
\subsection{Benoist--Hulin groups}
Let $\mathcal{K}$ denote the set of non-empty compact subsets of the Riemann sphere $\hat{\mathbb{C}}$, equipped with the Hausdorff distance. Let $\mathcal{K}_0 \subset \mathcal{K}$ be the subset consisting of those elements that are not singletons. The space $\mathcal{K}$ is a metric space, and $\mathcal{K}_0$ inherits the subspace topology from $\mathcal{K}$.  
The projective special linear group ${\rm PSL}_2(\mathbb{C})$ acts naturally on both $\mathcal{K}$ and $\mathcal{K}_0$.

Following Benoist and Hulin~\cite{BH}, a subgroup $\Gamma$ of ${\rm PSL}_2(\mathbb{C})$ is said to satisfy the \emph{Benoist--Hulin condition}, or the \emph{BH condition} for short, if every closed $\Gamma$-invariant subset of $\mathcal{K}_0$ consisting entirely of Jordan curves is contained in the set of $K$-quasicircles for some $K \geq 1$. A subgroup of ${\rm PSL}_2(\mathbb{C})$ that satisfies the BH condition is called a \emph{Benoist--Hulin group}, or a \emph{BH group} for short.
The BH condition is invariant under conjugation: if a subgroup $\Gamma$ of ${\rm PSL}_2(\mathbb{C})$ satisfies the BH condition, then so does $g\Gamma g^{-1}$ for every $g \in {\rm PSL}_2(\mathbb{C})$.

In~\cite{BH}, Benoist and Hulin observed that the full group ${\rm PSL}_2(\mathbb{C})$ is a BH group. They also remarked that BH groups are useful for characterizing quasicircles in the following sense: if $\Gamma$ is a BH group, then a Jordan curve $\gamma$ on $\hat{\mathbb{C}}$ is a quasicircle if and only if the closure of the $\Gamma$-orbit of $\gamma$ consists solely of points and Jordan curves.

Quasicircles have been extensively studied in function theory, and numerous characterizations of quasicircles are now known (cf. \cite{GH}). The Benoist--Hulin characterization is based on the dynamical properties of the group action on $\mathcal{K}_0$.

\subsection{Results}
The aim of this short paper is to develop a general theory for subgroups of ${\rm PSL}_2(\mathbb{C})$ satisfying the BH condition. We begin by proving the following:
\begin{theorem}
\label{thm:main3}
Any discrete BH group is of the first kind.
\end{theorem}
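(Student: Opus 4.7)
The plan is to argue by contrapositive: if a discrete subgroup $\Gamma \leq {\rm PSL}_2(\mathbb{C})$ is of the second kind, I will construct a closed $\Gamma$-invariant subset of $\mathcal{K}_0$ consisting of Jordan curves that is not contained in the set of $K$-quasicircles for any $K \geq 1$, thereby violating the BH condition.

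Suppose $\Gamma$ is of the second kind, so that the domain of discontinuity $\Omega := \hat{\mathbb{C}} \setminus \Lambda$ is non-empty, where $\Lambda$ denotes the limit set of $\Gamma$. Inside some open topological disk $D \subset \Omega$, I would pick a Jordan curve $\gamma_0$ that fails to be a quasicircle --- for instance, a Jordan curve with an outward cusp modelled on the semicubical parabola $y^2 = x^3$ near the origin, which violates Ahlfors' three-point condition and hence is not a $K$-quasicircle for any $K$. Set $F := \Gamma \cdot \gamma_0 \subset \mathcal{K}_0$. Since every element of ${\rm PSL}_2(\mathbb{C})$ is $1$-quasiconformal, none of the images $g \gamma_0$ can be a quasicircle either, so $F$ is $\Gamma$-invariant, consists entirely of Jordan curves, and is not contained in any class of $K$-quasicircles.

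The main obstacle is to verify that $F$ is closed in $\mathcal{K}_0$. For this I would invoke the standard convergence property of discrete M\"obius groups: any sequence $(g_n)$ of pairwise distinct elements of $\Gamma$ admits a subsequence $(g_{n_k})$ and points $\alpha, \beta \in \Lambda$ such that $g_{n_k} \to \beta$ locally uniformly on $\hat{\mathbb{C}} \setminus \{\alpha\}$. Because $\gamma_0 \subset \Omega$ is disjoint from $\Lambda$, the ``repelling'' point $\alpha$ does not lie on $\gamma_0$, so $g_{n_k}$ converges uniformly to the constant $\beta$ on $\gamma_0$, and consequently $g_{n_k}(\gamma_0) \to \{\beta\}$ in the Hausdorff topology. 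Now suppose a sequence in $F$ converges in $\mathcal{K}_0$ to some $C$: either it is eventually constant (so $C \in F$), or, after extracting a subsequence of distinct group elements, its Hausdorff limit is forced to be the singleton $\{\beta\}$, contradicting $C \in \mathcal{K}_0$. Hence $F$ is closed in $\mathcal{K}_0$, providing the desired counterexample to the BH condition and completing the contrapositive.
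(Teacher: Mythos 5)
Your argument is correct, and its overall shape matches the paper's: argue the contrapositive by placing a non-quasicircle Jordan curve (a cusped curve violating the Ahlfors three-point condition) inside the domain of discontinuity and showing that its $\Gamma$-orbit is already closed in $\mathcal{K}_0$, because any escaping sequence of translates degenerates to a singleton. The difference lies in the tool used for that degeneration step. The paper treats elementary and non-elementary groups separately: for non-elementary groups of the second kind it takes a precisely invariant disk $D\subset\Omega(\Gamma)$, applies Montel's theorem to the restrictions $\gamma_n|_D$ (this is where non-elementarity enters, via the limit set having at least three points), and uses proper discontinuity plus the open mapping theorem to force the limit to be constant; the elementary cases are then handled by four explicit constructions with translates and scalings of a cardioid. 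You instead invoke the convergence group property of discrete M\"obius groups --- every sequence of distinct elements subconverges to a constant $\beta\in\Lambda$ locally uniformly off a single point $\alpha\in\Lambda$ --- which, since your curve is a compact subset of $\Omega$ and hence misses $\alpha$, immediately gives Hausdorff convergence of the translates to $\{\beta\}$. This buys you a uniform treatment of all discrete second-kind groups, elementary or not (the finite-group case being trivial since the orbit is then a finite, hence closed, set), at the cost of citing the convergence property, which is itself proved by essentially the normal-family considerations the paper deploys directly. One cosmetic point: your closing dichotomy should be phrased as ``only finitely many group elements occur in the sequence'' versus ``infinitely many occur,'' rather than ``eventually constant'' versus ``distinct elements,'' but the conclusion is unaffected.
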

By definition, a BH group may be indiscrete. Nevertheless, \Cref{thm:main3} suggests that the action of a BH group on $\hat{\mathbb{C}}$ is highly nontrivial.
For elementary groups,
we will verify \Cref{thm:main3} by constructing a closed, $\Gamma$-invariant subset of $\mathcal{K}_0$ consisting of Jordan curves, where $\Gamma$ is a given elementary group (cf.~\S\ref{subsec:elementary}).

Since BH groups are defined in terms of the dynamical properties of their action on $\mathcal{K}_0$, it is natural to expect a form of \emph{stability}: if two subgroups of ${\rm PSL}_2(\mathbb{C})$ are sufficiently close and one satisfies the BH condition, then so does the other. Motivated by this intuition, we introduce the following notion.

Let $\Gamma_0 \subset \Gamma$ be subgroups of ${\rm PSL}_2(\mathbb{C})$. We say that $\Gamma_0$ is \emph{cocompact} in $\Gamma$ if there exists a compact subset $C \subset {\rm PSL}_2(\mathbb{C})$ such that
\[
\Gamma = C\Gamma_0=\bigcup_{\gamma \in \Gamma_0} C \gamma,
\]
where $AB := \{ a,b \mid a\in A, b \in B \}$ for some subsets $A$, $B\subset {\rm PSL}_2(\mathbb{C})$ and $C\gamma=C\{\gamma\}$.
With this definition, we obtain the following result:

\begin{theorem}
\label{thm:main}
Let $\Gamma$ and $\Gamma_0$ be subgroups of ${\rm PSL}_2(\mathbb{C})$ with $\Gamma_0 \subset \Gamma$. Suppose that $\Gamma_0$ is cocompact in $\Gamma$. Then $\Gamma$ satisfies the BH condition if and only if $\Gamma_0$ does.
\end{theorem}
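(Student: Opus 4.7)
The plan is to treat the two implications separately; one is essentially formal and the other is where the cocompactness hypothesis does all the work. For the direction \emph{$\Gamma_0$ BH $\Rightarrow$ $\Gamma$ BH}, I would simply note that any closed $\Gamma$-invariant subset of $\mathcal{K}_0$ consisting of Jordan curves is automatically closed and $\Gamma_0$-invariant (since $\Gamma_0\subset\Gamma$), and therefore the BH property of $\Gamma_0$ directly furnishes a uniform $K$. No appeal to cocompactness is needed for this half.

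For the converse, suppose $\Gamma$ is BH and let $\mathcal{F}\subset\mathcal{K}_0$ be a closed $\Gamma_0$-invariant family of Jordan curves. The strategy is to enlarge $\mathcal{F}$ to its $\Gamma$-orbit
\[
\widetilde{\mathcal{F}}:=\Gamma\cdot\mathcal{F},
\]
and check that $\widetilde{\mathcal{F}}$ still satisfies the hypotheses of the BH condition for $\Gamma$. Using $\Gamma=C\Gamma_0$ together with $\Gamma_0\cdot\mathcal{F}=\mathcal{F}$ yields the crucial identity $\widetilde{\mathcal{F}}=C\cdot\mathcal{F}$. Two of the required properties are immediate: $\Gamma$-invariance from the definition, and the fact that every member is a Jordan curve from the observation that Möbius transformations send Jordan curves to Jordan curves; moreover $\mathcal{F}\subset\widetilde{\mathcal{F}}$ because $e\in\Gamma$.

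The technical core of the proof is the closedness of $C\cdot\mathcal{F}$ in $\mathcal{K}_0$. Given a sequence $x_n=c_n\cdot f_n$ in $C\cdot\mathcal{F}$ converging in $\mathcal{K}_0$ to some $x$, compactness of $C$ allows us to pass to a subsequence with $c_n\to c\in C$. Joint continuity of the ${\rm PSL}_2(\mathbb{C})$-action on $\mathcal{K}$ (which follows from continuity of Möbius transformations on $\hat{\mathbb{C}}$ and standard properties of the Hausdorff distance) then yields $f_n=c_n^{-1}\cdot x_n\to c^{-1}\cdot x$. Since $x$ is not a singleton and Möbius maps are homeomorphisms of $\hat{\mathbb{C}}$, the point $c^{-1}\cdot x$ lies in $\mathcal{K}_0$, and the closedness of $\mathcal{F}$ in $\mathcal{K}_0$ forces $c^{-1}\cdot x\in\mathcal{F}$, whence $x=c\cdot(c^{-1}\cdot x)\in C\cdot\mathcal{F}$. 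With closedness established, the BH condition for $\Gamma$ applied to $\widetilde{\mathcal{F}}$ produces a constant $K\ge 1$, and the inclusion $\mathcal{F}\subset\widetilde{\mathcal{F}}$ transfers the uniform quasicircle bound back to $\mathcal{F}$.

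The main obstacle I anticipate is precisely this closedness step: without cocompactness, one would be forced to replace $\Gamma\cdot\mathcal{F}$ by its closure, and limit points that fail to be Jordan curves (for instance, singletons arising from a degenerating sequence of Möbius factors) could appear and obstruct the application of the BH condition. The hypothesis that the Möbius factors range over a compact set $C$ is exactly what rules out such degenerations and is used in no other essential way.
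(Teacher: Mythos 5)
Your proposal is correct and takes essentially the same route as the paper: the easy direction is the paper's Lemma~\ref{lem:subgroup}, and for the converse the paper likewise forms $\hat{\mathcal{C}}=C\cdot\mathcal{C}$, checks $\Gamma$-invariance via $g\eta=\eta_1\gamma_1$, and proves closedness by extracting a convergent subsequence from $C$ exactly as you do. Your presentation of the enlarged family as $\Gamma\cdot\mathcal{F}=C\cdot\mathcal{F}$ and the transfer of the constant via the inclusion $\mathcal{F}\subset\widetilde{\mathcal{F}}$ are only cosmetic variants of the paper's argument.
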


Note that this notion of cocompactness is transitive: for subgroups $\Gamma_1 \supset \Gamma_2 \supset \Gamma_3$ of ${\rm PSL}_2(\mathbb{C})$, if $\Gamma_2$ is cocompact in $\Gamma_1$ and $\Gamma_3$ is cocompact in $\Gamma_2$, then $\Gamma_3$ is cocompact in $\Gamma_1$. 
When discussing the BH condition for subgroups, it is important to ensure that the notion under consideration respects transitivity.

As an immediate consequence of \Cref{thm:main}, we obtain the following:

\begin{corollary}
\label{coro:2}
Let $\Gamma$ and $\Gamma_0$ be subgroups of ${\rm PSL}_2(\mathbb{C})$. 
If $\Gamma_0$ is a finite-index subgroup of $\Gamma$, then $\Gamma$ satisfies the BH condition if and only if $\Gamma_0$ does.
\end{corollary}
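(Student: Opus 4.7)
The plan is to deduce the corollary directly from \Cref{thm:main} by checking that finite index implies the cocompactness condition introduced in the excerpt. Once that is done, the equivalence of the BH condition for $\Gamma$ and $\Gamma_0$ follows with no further work.

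First I would unpack the definition of cocompactness: we need a compact subset $C \subset {\rm PSL}_2(\mathbb{C})$ with $\Gamma = C\Gamma_0$. If $[\Gamma : \Gamma_0] = n < \infty$, choose coset representatives $g_1, \dots, g_n \in \Gamma$ so that $\Gamma = \bigsqcup_{i=1}^n g_i \Gamma_0$. Set $C = \{g_1, \dots, g_n\}$. Being finite, $C$ is compact in ${\rm PSL}_2(\mathbb{C})$, and by construction $\Gamma = C\Gamma_0 = \bigcup_{\gamma \in \Gamma_0} C\gamma$, so $\Gamma_0$ is cocompact in $\Gamma$ in the sense defined before \Cref{thm:main}.

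Having verified the hypothesis of \Cref{thm:main}, I would simply invoke that theorem to conclude that $\Gamma$ satisfies the BH condition if and only if $\Gamma_0$ does. No further ingredient is required, since the corollary makes no claim beyond this equivalence.

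There is essentially no obstacle: the only substantive point is the trivial observation that a finite set is compact, which lets finite-index subgroups slot into the more flexible notion of cocompactness used in \Cref{thm:main}. This is why the statement is billed as an \emph{immediate consequence} of the main theorem.
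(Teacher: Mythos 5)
Your proposal is correct and follows exactly the paper's route: the paper states that \Cref{coro:2} follows from the fact that any finite-index subgroup is cocompact, and you have simply spelled out that a finite set of coset representatives serves as the required compact set $C$ before invoking \Cref{thm:main}.
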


\begin{corollary}
\label{coro:1}
Any uniform lattice in ${\rm PSL}_2(\mathbb{C})$ satisfies the BH condition. In particular, for a Jordan curve $\gamma$ on $\hat{\mathbb{C}}$, the following are equivalent:
\begin{itemize}
    \item[(a)] $\gamma$ is a quasicircle;
    \item[(b)] for any uniform lattice $\Gamma$ in ${\rm PSL}_2(\mathbb{C})$, the closure of the $\Gamma$-orbit of $\gamma$ in $\mathcal{K}$ consists only of points and Jordan curves;
    \item[(c)] for some uniform lattice $\Gamma$ in ${\rm PSL}_2(\mathbb{C})$, the closure of the $\Gamma$-orbit of $\gamma$ in $\mathcal{K}$ consists only of points and Jordan curves.
\end{itemize}
\end{corollary}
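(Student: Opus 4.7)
The plan is to obtain \Cref{coro:1} as a direct application of \Cref{thm:main} combined with the Benoist--Hulin characterization of quasicircles recalled in the introduction.

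First, I would observe that every uniform lattice $\Gamma$ in ${\rm PSL}_2(\mathbb{C})$ is cocompact in ${\rm PSL}_2(\mathbb{C})$ in the paper's sense. By definition, $\Gamma$ is discrete and the quotient ${\rm PSL}_2(\mathbb{C})/\Gamma$ is compact; equivalently, $\Gamma$ admits a compact fundamental domain, yielding a compact set $C \subset {\rm PSL}_2(\mathbb{C})$ with ${\rm PSL}_2(\mathbb{C}) = C\Gamma$, which is exactly the cocompactness condition defined before \Cref{thm:main}. Since Benoist and Hulin proved that ${\rm PSL}_2(\mathbb{C})$ itself is a BH group, \Cref{thm:main} applied to the pair $\Gamma_0 = \Gamma \subset {\rm PSL}_2(\mathbb{C})$ forces $\Gamma$ to satisfy the BH condition, which is the first assertion of the corollary.

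For the equivalence of (a), (b), (c), I would invoke the Benoist--Hulin characterization recalled in the introduction: for any BH group $\Gamma$, a Jordan curve $\gamma$ on $\hat{\mathbb{C}}$ is a quasicircle if and only if the closure of its $\Gamma$-orbit in $\mathcal{K}$ consists only of points and Jordan curves. Since any uniform lattice is now known to be a BH group, this directly gives (a) $\Leftrightarrow$ (b) and (a) $\Leftrightarrow$ (c); the implication (b) $\Rightarrow$ (c) is trivial. The nontrivial direction (c) $\Rightarrow$ (a) is exactly where the BH property is used: the set of Jordan curves in the orbit closure is a closed $\Gamma$-invariant subset of $\mathcal{K}_0$ consisting of Jordan curves, so the BH property forces this set to consist of $K$-quasicircles for some uniform $K$, whence $\gamma$ is a quasicircle. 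The reverse direction relies only on the standard normal-families fact that a Hausdorff limit of $K$-quasicircles in $\hat{\mathbb{C}}$ is either a point or a $K$-quasicircle, and makes no use of the BH property.

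There is no essential obstacle specific to this corollary: all of the substantive work has been done in \Cref{thm:main}, and \Cref{coro:1} is a direct translation of that theorem together with Benoist--Hulin's recalled characterization of quasicircles.
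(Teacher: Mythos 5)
Your proposal is correct and follows exactly the route the paper takes: the paper derives \Cref{coro:1} by noting that a uniform lattice is cocompact in ${\rm PSL}_2(\mathbb{C})$, applying \Cref{thm:main} together with the Benoist--Hulin theorem that ${\rm PSL}_2(\mathbb{C})$ is a BH group, and then invoking the Benoist--Hulin characterization of quasicircles for the equivalence of (a), (b), (c). Your write-up merely fills in details (such as the Hausdorff-limit fact for the easy direction) that the paper leaves implicit.
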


\Cref{coro:2} follows from the fact that any finite-index subgroup is cocompact. \Cref{coro:1} is a direct consequence of \Cref{thm:main} and the Benoist--Hulin characterization of quasicircles via the BH condition.

We also derive the following from \Cref{thm:main}:

\begin{corollary}
\label{coro:4}
A Borel subgroup of ${\rm PSL}_2(\mathbb{C})$ is a BH group. In particular, any parabolic subgroup of ${\rm PSL}_2(\mathbb{C})$ is a BH group.
\end{corollary}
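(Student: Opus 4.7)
My plan is to apply \Cref{thm:main} with $\Gamma = {\rm PSL}_2(\mathbb{C})$ --- already a BH group by Benoist--Hulin --- and $\Gamma_0 = B$ a Borel subgroup. Because any two Borel subgroups of ${\rm PSL}_2(\mathbb{C})$ are conjugate and the BH condition is invariant under conjugation, it suffices to treat one convenient standard Borel. I would take $B$ to be the stabilizer of $\infty$ in $\hat{\mathbb{C}}$, namely the group of affine maps $z \mapsto a z + b$ with $a \in \mathbb{C}^{*}$ and $b \in \mathbb{C}$. The only substantive step is then to exhibit a compact set $C \subset {\rm PSL}_2(\mathbb{C})$ with $C \cdot B = {\rm PSL}_2(\mathbb{C})$.

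For this I would invoke the Iwasawa decomposition ${\rm PSL}_2(\mathbb{C}) = K A N$, in which $K = {\rm PSU}(2)$ is compact, $A$ is the subgroup of diagonal elements with positive real entries, and $N$ is the complex unipotent upper-triangular subgroup. Since $A N \subset B$, this gives ${\rm PSL}_2(\mathbb{C}) = K \cdot B$ at once, so $C := K$ is the desired compact witness. Applying \Cref{thm:main} then promotes the BH condition from ${\rm PSL}_2(\mathbb{C})$ to $B$, and conjugation-invariance extends it to every Borel subgroup.

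For the ``in particular'' clause, I would simply note that in the rank-one group ${\rm PSL}_2(\mathbb{C})$ the proper parabolic subgroups in the algebraic-group sense are exactly the Borel subgroups --- each is the stabilizer of a point of $\hat{\mathbb{C}} = \mathbb{P}^1(\mathbb{C})$ --- so the claim is immediate from the first assertion. The reader should be warned that a \emph{discrete} Kleinian parabolic subgroup has a single limit point, is of the second kind, and so by \Cref{thm:main3} is not a BH group; hence ``parabolic'' in this corollary must be read in the algebraic sense.

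I do not foresee any serious obstacle: the whole corollary collapses to a one-line application of \Cref{thm:main} once the Iwasawa decomposition is recalled. The only point requiring genuine care is the terminological one just mentioned, which should be flagged explicitly so as not to clash with \Cref{thm:main3}.
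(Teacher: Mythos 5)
Your proposal is correct and follows essentially the same route as the paper: the Iwasawa decomposition ${\rm PSL}_2(\mathbb{C}) = {\rm PSU}(2)\mathcal{B}$ exhibits the Borel subgroup as cocompact in ${\rm PSL}_2(\mathbb{C})$, and \Cref{thm:main} together with the Benoist--Hulin theorem does the rest. The only cosmetic difference is in the ``in particular'' clause, where the paper invokes \Cref{lem:subgroup} (a subgroup containing a BH group is BH, applied to a parabolic containing a Borel) rather than identifying proper parabolics with Borels in rank one; both are valid, and your terminological caveat about the two meanings of ``parabolic'' is well taken.
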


A \emph{Borel subgroup} of an algebraic group $G$ is, by definition, a closed, connected, solvable subgroup that is not properly contained in any other such subgroup. A Borel subgroup is unique up to conjugacy. A \emph{parabolic subgroup} $P$ of $G$ is a closed subgroup such that the quotient $G/P$ is projective. A closed subgroup is parabolic if and only if it contains a Borel subgroup (cf.~\cite[Chapter~VIII, §21.3]{Hu}).
In the case of ${\rm PSL}_2(\mathbb{C})$, the subgroup $\mathcal{B}$ consisting of upper triangular elements of ${\rm PSL}_2(\mathbb{C})$ is a Borel subgroup.

\Cref{coro:4} follows from the Benoist--Hulin theorem \cite{BH} and from the fact, stemming from the Iwasawa decomposition, that
\[
{\rm PSL}_2(\mathbb{C}) = 
{\rm PSU}(2)\mathcal{B},
\]
where ${\rm SU}(2)$ denotes the $2 \times 2$ special unitary group (see also \Cref{lem:subgroup} in \S\ref{subsec:a_lemma}).

Since every element of the Borel subgroup fixes the point at infinity in $\hat{\mathbb{C}}$, any discrete subgroup of the Borel subgroup is elementary. Hence, the Borel subgroup contains no uniform lattice, and \Cref{coro:4} is independent of \Cref{coro:1}.

Furthermore, as discussed in \Cref{thm:main3}, the action of any BH group on $\hat{\mathbb{C}}$ exhibits rich dynamical behavior. Indeed, any \emph{discrete} BH group is Zariski dense. Nevertheless, \Cref{coro:4} shows that a BH group is not necessarily Zariski dense in ${\rm PSL}_2(\mathbb{C})$.

We conclude this section by posing a natural question arising from the above observations:
Is it true that a subgroup $\Gamma$ of ${\rm PSL}_2(\mathbb{C})$ is a BH group if and only if the set of fixed points of elements in $\Gamma$ is dense in $\hat{\mathbb{C}}$?




\subsection{About this paper}
This paper is organized as follows.  
In \S\ref{sec:subgroups}, we review notions from the theory of Kleinian groups.  
The proofs of the main results are given in \S\ref{sec:proofs}. Specifically, \Cref{thm:main3} is proved in \S\ref{subsec:1}, and \Cref{thm:main} in \S\ref{subsec:main}.  
We discuss the case of elementary Kleinian gruops in \S\ref{subsec:elementary}.

Part of the content of this paper appears in the second author's master's thesis.

\subsection*{Acknowledgements}
The authors thank Professor Kazuki Kannaka for suggesting \Cref{coro:4}.

\section{Notation}

\subsection{Quasicircles}
For references related to this section, see \cite{GH} for instance.

Let $D$ and $E$ be domains in $\hat{\mathbb{C}}$. An orientation-preserving homeomorphism $h\colon D \to E$ is said to be $K$-\emph{quasiconformal} for some $K\ge 1$ if the following two conditions are satisfied:
\begin{itemize}
\item[(i)] $h$ is absolutely continuous on lines; and
\item[(ii)] 
$|h_{\overline{z}}| \le \frac{K-1}{K+1} |h_z|$
almost everywhere on $D$.
\end{itemize}
It is known that$h$ is $1$-quasiconformal if and only if it is conformal. Especially, $1$-quasiconformal mapping on $\hat{\mathbb{C}}$ is a M\"obius transformation,and vice versa.

A \emph{Jordan curve} on $\hat{\mathbb{C}}$ is a simple closed curve, that is, a continuous injective map from the circle $S^1$ into $\hat{\mathbb{C}}$. A $K$-\emph{quasicircle} for some $K\ge 1$ is a Jordan curve which is the image of a round circle by a $K$-quasiconformal mapping on $\hat{\mathbb{C}}$. A \emph{quasicircle} is a $K$-quasicircle for some $K\ge 1$. The image of a $K$-quasicircle under a $K'$-quasiconformal mapping is a $KK'$-quasicircle.

\subsection{Kleinian groups}
\label{sec:subgroups}
For references related to this section, see \cite{Ka}, \cite{Ma}, \cite{MT}, and \cite{Oh}, for instance.

The group ${\rm PSL}_2(\mathbb{C})$ is identified with the group of biholomorphic automorphisms of the Riemann sphere $\hat{\mathbb{C}}$. The Riemann sphere can also be naturally identified with the ideal boundary of hyperbolic $3$-space $\mathbb{H}^3$, and the action of each element of ${\rm PSL}_2(\mathbb{C})$ extends to an isometry of $\mathbb{H}^3$.

A discrete subgroup of ${\rm PSL}_2(\mathbb{C})$ is called a \emph{Kleinian group}.  
The \emph{limit set} $\Lambda(\Gamma)$ of a Kleinian group $\Gamma$ is the set of accumulation points in $\hat{\mathbb{C}}$ of the orbit of a point in $\mathbb{H}^3$ under the action of $\Gamma$.

A Kleinian group $\Gamma$ is said to be \emph{elementary} if $\Lambda(\Gamma)$ is a finite set; otherwise, it is called \emph{non-elementary}.
It is known that the limit set of any elementary Kleinian group contains at most two points, and every elementary Kleinian group is virtually abelian.  
In fact, any non-trivial elementary Kleinian group contains as a finite-index subgroup either a cyclic group generated by a parabolic or loxodromic element, or a rank-$2$ parabolic subgroup.

A Kleinian group $\Gamma$ is said to be of the \emph{first kind} if $\Lambda(\Gamma) = \hat{\mathbb{C}}$, and of the \emph{second kind} otherwise (cf.~\cite[VII.F.6]{Ma}).
In particular, every elementary group is of the second kind.  
For a Kleinian group $\Gamma$ of the second kind, the complement $\Omega(\Gamma) = \hat{\mathbb{C}} \setminus \Lambda(\Gamma)$ is called the \emph{region of discontinuity}, on which $\Gamma$ acts properly discontinuously.
Moreover, the limit set $\Lambda(\Gamma)$ of a Kleinian group of the second kind has empty interior.

\section{Proofs of results}
\label{sec:proofs}

\subsection{A lemma}
\label{subsec:a_lemma}
We begin with the following elementary lemma.

\begin{lemma}
\label{lem:subgroup}
Let $\Gamma$ and $\Gamma_0$ be subgroups of ${\rm PSL}_2(\mathbb{C})$ with $\Gamma_0 \subset \Gamma$.
If $\Gamma_0$ is a BH group, then so is $\Gamma$.
\end{lemma}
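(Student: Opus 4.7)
The plan is to observe that the BH condition is monotone decreasing in the group: enlarging the group can only reduce the collection of invariant sets, so inheritance of BH from a subgroup to a supergroup is essentially automatic.

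More concretely, suppose $\mathcal{F} \subset \mathcal{K}_0$ is a closed $\Gamma$-invariant subset consisting entirely of Jordan curves. Since $\Gamma_0 \subset \Gamma$, the set $\mathcal{F}$ is in particular closed and $\Gamma_0$-invariant. Applying the hypothesis that $\Gamma_0$ satisfies the BH condition, we conclude that $\mathcal{F}$ is contained in the set of $K$-quasicircles for some $K \geq 1$. Since $\mathcal{F}$ was arbitrary, $\Gamma$ satisfies the BH condition.

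There is no real obstacle here: the lemma is a direct consequence of the definition, and the proof is a one-line check of invariance. The content of the paper lies in the converse-type statements (\Cref{thm:main} and its corollaries), where one must go in the harder direction, from BH for $\Gamma$ to BH for $\Gamma_0$, using cocompactness to transport the quasiconformal distortion bound. This lemma serves as the trivial half of that dichotomy and will be invoked silently in the sequel whenever one needs to pass to a larger group.
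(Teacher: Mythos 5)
Your proposal is correct and coincides with the paper's own proof: both observe that any closed $\Gamma$-invariant set of Jordan curves is in particular $\Gamma_0$-invariant, so the BH condition for $\Gamma_0$ applies directly. Nothing is missing.
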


\begin{proof}
Let $\mathcal{C} \subset \mathcal{K}_0$ be a $\Gamma$-invariant closed subset consisting of Jordan curves. Then $\mathcal{C}$ is also $\Gamma_0$-invariant. Since $\Gamma_0$ is a BH group, every element of $\mathcal{C}$ is a $K$-quasicircle for some $K \ge 1$.
\end{proof}

\subsection{Elementary Kleinian groups are not BH groups}
\label{subsec:elementary}

In this section, we verify that any elementary Kleinian group $\Gamma$ does not satisfy the BH condition. 
According to \Cref{coro:2}, it suffices to consider the following cases:
\begin{itemize}
\item[(i)] $\Gamma$ is the trivial group;
\item[(ii)] $\Gamma$ is a cyclic group generated by a parabolic element;
\item[(iii)] $\Gamma$ is a cyclic group generated by a loxodromic element; or
\item[(iv)] $\Gamma$ is an abelian group of rank two generated by two parabolic elements.
\end{itemize}
Note that \Cref{coro:2} is independent of the content of this section.

To this end, let $\gamma_{\mathrm{card}}$ be the boundary of the cardioid
$r = r(\theta) = 1 - \cos \theta$ in polar coordinates (cf.~\Cref{fig:Cardioid}).
Let $z(\theta) = (r(\theta)\cos \theta, r(\theta)\sin \theta)$ for $0 \le \theta \le 2\pi$ be its parametrization.
We first briefly verify that $\gamma_{\mathrm{card}}$ is not a quasicircle.
\begin{figure}[t]
\includegraphics[bb = 0 1 359 412, height = 3cm]{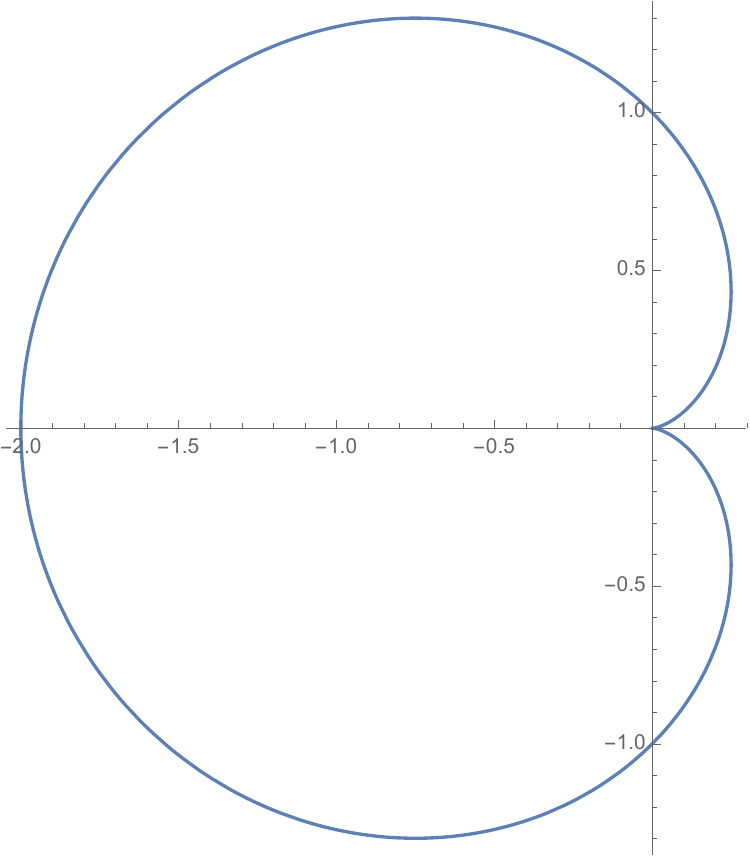}
\caption{Cardioid}
\label{fig:Cardioid}
\end{figure}

Indeed, near the cusp $z(0) = 0$, using the expansions 
$\cos \theta = 1 - \tfrac{\theta^{2}}{2} + O(\theta^{4})$ and 
$\sin \theta = \theta + O(\theta^{3})$ as $\theta \to 0$, we obtain 
$r(\theta) = 1 - \cos \theta = \tfrac{\theta^{2}}{2} + O(\theta^{4})$, and hence
\[
x(\theta) = r(\theta)\cos \theta = \tfrac{\theta^{2}}{2} + O(\theta^{4}), \qquad
y(\theta) = r(\theta)\sin \theta = \tfrac{\theta^{3}}{2} + O(\theta^{5}).
\] 
Thus, we have $|z(\pm \theta) - z(0)| \asymp \theta^{2}$ and 
$|z(\theta) - z(-\theta)| \asymp \theta^{3}$, so that
\[
\frac{|z(\theta) - z(0)|}{|z(\theta) - z(-\theta)|} \asymp \frac{1}{\theta} \to \infty
\]
as $\theta \to 0$.
This violates the Ahlfors three–point condition, and hence $\gamma_{\mathrm{card}}$ is not a quasicircle.

Let us now return to the verification that elementary Kleinian groups are not BH groups.
To this end, for each of the four cases listed above, we construct a closed, $\Gamma$–invariant subset of $\mathcal{K}_0$ consisting of Jordan curves whose quasicircle constants are unbounded.

\subsubsection*{Case (i)} $\Gamma_{\mathrm{id}}=\{\mathrm{id}\}$.  
Set $\mathcal C=\{\gamma_{\mathrm{card}}\}\subset\mathcal K_0$.  If $\Gamma_{\mathrm{id}}$ were BH, $\gamma_{\mathrm{card}}$ would be a $K$–quasicircle for some $K$, which is a contradiction.

\subsubsection*{Case (ii)} $\Gamma_{\mathrm{par}}=\langle T\rangle$, $T(z)=z+1$.  
Let
\[\mathcal C=\{\,T^{n}(\gamma_{\mathrm{card}}): n\in\mathbb Z\,\}\subset\mathcal K_0.\]
It is $\Gamma_{\mathrm{par}}$–invariant, consists of Jordan curves, and is closed in $\mathcal K_0$ (any Hausdorff limit of distinct translates escapes to $\{\infty\}$).
Since each element is a translate of $\gamma_{\mathrm{card}}$, no finite $K$ bounds their
quasicircle constants. Hence $\Gamma_{\mathrm{par}}$ does not satisfy the BH condition.

\subsubsection*{Case (iii)} $\Gamma_{\mathrm{lox}}=\langle L\rangle$, $L(z)=\lambda (z+1)+1$ ($|\lambda|>1$).
Define
\[\mathcal C=\{\,L^{n}(\gamma_{\mathrm{card}}): n\in\mathbb Z\,\}\subset\mathcal K_0.\]
Again $\mathcal C$ is $\Gamma_{\mathrm{lox}}$–invariant, closed (limits go to either $\{0\}$ or $\{\infty\}$), and every member is a scaled copy of $\gamma_{\mathrm{card}}$, so $\sup_{\gamma\in\mathcal C}K(\gamma)=\infty$. Thus $\Gamma_{\mathrm{lox}}$ does not satisfy the BH condition.

\subsubsection*{Case (iv)} $\Gamma_{\mathrm{rk2}}=\langle T_1,T_2\rangle$ with $T_1(z)=z+1$ and $T_2(z)=z+\tau$ ($\Im\tau>0$).  
Let\[\mathcal C=\{\,T_1^n\circ T_2^m(\gamma_{\mathrm{card}}) : m,n\in\mathbb Z\,\}\subset\mathcal K_0.\]
This is closed (limits escape to $\{\infty\}$), $\Gamma_{\mathrm{rk2}}$–invariant, and all elements are translates of $\gamma_c$. Again $\sup_{\gamma\in\mathcal C}K(\gamma)=\infty$, contradicting BH. Therefore $\Gamma_{\mathrm{rk2}}$ does not satisfy the BH condition.

\medskip
In all four cases we reach a contradiction, so every elementary Kleinian group
fails the BH condition.

\subsection{Proof of \Cref{thm:main3}}
\label{subsec:1}
We suppose that $\Gamma$ is non-elementary. The case of elementary groups is already discussed in \S\ref{subsec:elementary}.

Let $\Gamma$ be a Kleinian group of the second kind. Since the set of fixed points of elliptic elements in $\Gamma$ is discrete in $\Omega(\Gamma)$, there exists a relatively compact simply connected domain $D \subset \Omega(\Gamma)$ such that $\gamma(D) \cap D = \emptyset$ for all $\gamma \in \Gamma \setminus \{ \mathrm{id} \}$.

Let $\alpha$ be a Jordan curve contained in $D$ that is not a quasicircle. For instance, one may take $\alpha$ to be the boundary of the image of a cardioid under an affine map.
Let $\mathcal{C}$ be the closure in $\mathcal{K}_0$ of the $\Gamma$-orbit of $\alpha$. By construction, $\mathcal{C}$ contains a non-quasicircle. To show that $\Gamma$ is not a BH group, it suffices to prove that $\mathcal{C}$ consists only of Jordan curves.

Let $\beta \in \mathcal{C}$. Then one of the following holds:
\begin{itemize}
\item[(a)] $\beta = \gamma(\alpha)$ for some $\gamma \in \Gamma$; or
\item[(b)] there exists a sequence $\{ \gamma_n \} \subset \Gamma$ such that $\gamma_n(\alpha) \to \beta$ in $\mathcal{K}_0$.
\end{itemize}

In case (a), since $\alpha$ is a Jordan curve and Möbius transformations preserve this property, $\beta$ is also a Jordan curve.

Now consider case (b). Let $g_n = \gamma_n|_D$. Since $\Gamma$ is non-elementary, the limit set contains at least three points. Since $D\subset \Omega(\Gamma)$, $g_n(D)=\gamma_n(D)\subset\Omega(\Gamma)$ for $n\ge 1$, from the Montel theorem, the sequence $\{ g_n \}_{n=1}^\infty$ is a normal family. By passing to a subsequence if necessary, we may assume $g_n \to g_\infty$ uniformly on compact subsets of $D$, where $g_\infty$ is holomorphic on $D$.

Since $\Gamma$ acts properly discontinuously on $\Omega(\Gamma)$, the images $g_n(D)$ eventually exit any compact subset of $\Omega(\Gamma)$. Hence the image $g_\infty(D)$ is contained in the limit set $\Lambda(\Gamma)$. As $\Gamma$ is of the second kind, $\Lambda(\Gamma)$ has empty interior, so the open mapping theorem implies $g_\infty$ must be constant.

Therefore, $\gamma_n(\alpha) = g_n(\alpha) \to g_\infty(\alpha)$, which is a point. Thus $\beta$ is a singleton, not a Jordan curve, contradicting the assumption that $\beta \in \mathcal{C} \subset \mathcal{K}_0$. We conclude that case (b) cannot occur.

Hence, $\mathcal{C} = \{ \gamma(\alpha) \mid \gamma \in \Gamma \}$ and consists entirely of Jordan curves.

\subsection{Proof of \Cref{thm:main}}
\label{subsec:main}

By \Cref{lem:subgroup}, if $\Gamma_0$ satisfies the BH condition, then so does $\Gamma$.

Now suppose $\Gamma$ satisfies the BH condition. Let $\mathcal{C} \subset \mathcal{K}_0$ be a $\Gamma_0$-invariant closed subset consisting of Jordan curves, and define
\[
\hat{\mathcal{C}} = \{ \eta(\alpha) \in \mathcal{K}_0 \mid \eta \in C,\ \alpha \in \mathcal{C} \}.
\]
Since each $\eta \in C$ is a Möbius transformation, and Möbius transformations preserve Jordan curves, it follows that $\hat{\mathcal{C}}$ also consists of Jordan curves.

We claim that $\hat{\mathcal{C}}$ is a closed, $\Gamma$-invariant subset of $\mathcal{K}_0$.

Let $\alpha \in \hat{\mathcal{C}}$ and $g \in \Gamma$. Then $\alpha = \eta(\beta)$ for some $\eta \in C$ and $\beta \in \mathcal{C}$. Since $\Gamma = \bigcup_{\gamma \in \Gamma_0} C \gamma$ and $C \cdot \mathrm{id} \subset \Gamma$, there exist $\eta_1 \in C$ and $\gamma_1 \in \Gamma_0$ such that $g \eta = \eta_1 \gamma_1$. Since $\mathcal{C}$ is $\Gamma_0$-invariant, $\gamma_1(\beta) \in \mathcal{C}$, and hence
\[
g(\alpha) = g(\eta(\beta)) = \eta_1(\gamma_1(\beta)) \in \hat{\mathcal{C}},
\]
showing that $\hat{\mathcal{C}}$ is $\Gamma$-invariant.

To show that $\hat{\mathcal{C}}$ is closed, let $\alpha_n = \eta_n(\beta_n) \in \hat{\mathcal{C}}$ converge to $\alpha_0$ in $\mathcal{K}_0$. Since $C$ is compact, we may assume $\eta_n \to \eta_0 \in C$. Then
\[
\beta_n = \eta_n^{-1}(\alpha_n) \to \eta_0^{-1}(\alpha_0) =: \beta_0.
\]
As each $\beta_n \in \mathcal{C}$ and $\mathcal{C}$ is closed, we conclude that $\beta_0 \in \mathcal{C}$ and $\alpha_0 = \eta_0(\beta_0) \in \hat{\mathcal{C}}$. Thus, $\hat{\mathcal{C}}$ is a closed set in $\mathcal{K}_0$.

Since $\Gamma$ satisfies the BH condition, there exists $K \ge 1$ such that every element of $\hat{\mathcal{C}}$ is a $K$-quasicircle. This means that for any $\beta \in \mathcal{C}$ and any $\eta \in C$, $\eta(\beta)$ is a $K$-quasicircle. Since Möbius transformations preserve the class of $K$-quasicircles, $\beta$ must also be a $K$-quasicircle. Therefore, $\Gamma_0$ satisfies the BH condition.

\end{document}